\theoremstyle{plain}
\newtheorem{thm}{Theorem}[section]
\newtheorem{prop}[thm]{Proposition}
\newtheorem{lem}[thm]{Lemma}
\theoremstyle{definition}
\theoremstyle{remark}
\newcommand{\F}{\mathbb{F}}
\address[F. Matsumoto]
{Department of Mathematics, Graduate School of Science,
Kyoto University, Kyoto 606-8502, Japan}
\email{matsumoto.futa.83x@st.kyoto-u.ac.jp}
\keywords{Lucas sequence, Vieta-Lucas polynomial, residue}
\subjclass[2020]{11B39}
\title{Relationship between Vieta-Lucas polynomials and Lucas sequences}
\author{Futa Matsumoto}
\begin{document}

\maketitle

\begin{abstract}
Let $w_n=w_n(P,Q)$ be numerical sequences which satisfy the recursion relation
\begin{equation*}
w_{n+2}=Pw_{n+1}-Qw_n.
\end{equation*}
We consider two special cases $(w_0,w_1)=(0,1)$ and $(w_0,w_1)=(2,P)$ and we denote them by $U_n$ and $V_n$ respectively.  Vieta-Lucas polynomial $V_n(X,1)$ is the polynomial of degree $n$. We show that the congruence equation $V_n(X,1)\equiv C \mod p$ has a solution if and only if $U_{(p-\epsilon)/d}(C+2,C+2)$ is divisible by $p$, where $\epsilon\in\{\pm 1\}$ depends on $C$ and $p$, and $d=\gcd(n,p-\epsilon)$.
\end{abstract}

\section{Introduction}
\label{intro}

This paper is part of the author's master thesis.
	Let $\alpha$ and $\beta$ be the two roots of the equation
    \begin{equation}
    \label{quad poly}
    x^2-Px+Q=0
    \end{equation}
    whose coefficients $P,Q$ are integers. We have
    \begin{equation}
    \label{P Q def}
    P=\alpha+\beta, Q=\alpha\beta.
    \end{equation}
We will consider two numerical sequences $U$ and $V$ defined by the equations
\begin{equation}
\label{quad def}
U_n(P,Q)=\frac{\alpha^n-\beta^n}{\alpha-\beta},\hspace{1cm} V_n(P,Q)=\alpha^n+\beta^n
\end{equation}
where $n$ is a non-negative integer.  $U_n,V_n$ are integer for any $n$, since $\alpha,\beta$ are roots of an algebraic equation and since $U_n,V_n$ are symmetric polynomials of $\alpha$ and $\beta$.
These sequences satisfy the following difference equations for $n$.
\begin{equation}
\label{def U}
U_{n+2}(P,Q)=PU_{n+1}(P,Q)-QU_n(P,Q),
\end{equation}
\begin{equation}
\label{def V}
V_{n+2}(P,Q)=PV_{n+1}(P,Q)-QV_n(P,Q).
\end{equation}
We can verify them by substituting \eqref{P Q def} and \eqref{quad def} for \eqref{def U} and \eqref{def V}.
The first few terms of $U_n$ and $V_n$ are as follows.
\begin{gather}
\label{U ex}
U_0=0,U_1=1,U_2=P,U_3=P^2-Q,\dots\\
\label{V ex}
V_0=2,V_1=P,V_2=P^2-2Q,V_3=P^3-3PQ,\dots.
\end{gather}
The sequences $U_n,V_n$ were first systematically studied by E.Lucas in \cite{Lucas}, so these sequences are called \textbf{Lucas sequences}. 

Throughout this paper, $p$ will denote an odd prime unless specified otherwise.
It was shown in \cite[pp. 344 - 345]{Carmichael2} that $U_n$ and $V_n$ are purely periodic modulo $p$ if $p\nmid Q$. From here on, we assume that $p\nmid Q$.

The \textbf{restricted period} $r(p)$ is the least positive integer $r$ such that $U_{n+r}\equiv MU_n\mod p$ for any $n\ge 0$ and some nonzero integer $M$. Since $U_0=0$ and $U_1=1$ and $U_n$ is purely periodic modulo $p$, $r(p)$ is the least positive integer $r$ such that $U_r\equiv 0 \mod p$.

The \textbf{period} $\mu(p)$ is the least positive integer $m$ such that $U_{n+r} \equiv U_n \mod p$ for any $n\ge 0$.

To determine $r(p)$, the next lemma is important. We denote Legendre symbol by $\left(\frac{*}{*}\right)$ or $(*/*)$.
\begin{lem}
{\rm(\cite[p. 424]{Lehmer} Theorem 1.9)}
\label{wp cond}

Let $p$ be a prime which does not divide $2PQ$. Let $D=P^2-4Q$. Then, $r(p)$ divides $p-\left(\frac{D}{p}\right)$.
\end{lem}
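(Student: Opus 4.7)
The plan is to compute $U_{p-(D/p)}$ modulo $p$ directly from the Binet formula \eqref{quad def}, splitting on the two possible values of $\left(\frac{D}{p}\right)$, and then to invoke a standard minimality argument to upgrade divisibility of a single $U_N$ by $p$ to divisibility of $N$ by $r(p)$.

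Since $D=(\alpha-\beta)^2$, the case $\left(\frac{D}{p}\right)=1$ places $\alpha,\beta$ in $\mathbb{F}_p$, while the case $\left(\frac{D}{p}\right)=-1$ places them in $\mathbb{F}_{p^2}\setminus\mathbb{F}_p$; in either case $\alpha\ne\beta$ modulo $p$ because $p\nmid D$. In the first case I would apply Fermat's little theorem to $\alpha,\beta\in\mathbb{F}_p^{\times}$ (nonzero since $\alpha\beta=Q\not\equiv 0$): from $\alpha^{p-1}\equiv\beta^{p-1}\equiv 1$ one gets $U_{p-1}\equiv 0\pmod{p}$. In the second case, $\alpha$ and $\beta$ are Galois conjugates in $\mathbb{F}_{p^2}/\mathbb{F}_p$, so the Frobenius swaps them, giving $\alpha^{p}=\beta,\beta^{p}=\alpha$, hence $\alpha^{p+1}=\beta^{p+1}=\alpha\beta=Q$ and $U_{p+1}\equiv 0$ in $\mathbb{F}_{p^2}$. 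In both cases I have produced $U_N\equiv 0\pmod{p}$ with $N=p-\left(\frac{D}{p}\right)$.

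For the final step, I would use the addition identity $U_{m+n}=U_{m+1}U_n - QU_m U_{n-1}$, which follows directly from \eqref{quad def}. Taking $m=r(p)$ gives $U_{r(p)+n}\equiv U_{r(p)+1}U_n\pmod{p}$, where $U_{r(p)+1}\not\equiv 0\pmod{p}$, since otherwise the recursion would force every $U_n\equiv 0\pmod{p}$, contradicting $U_1=1$. Writing $N=q\,r(p)+s$ with $0\le s<r(p)$ and iterating, $0\equiv U_N\equiv U_{r(p)+1}^{q}\,U_s\pmod{p}$, hence $U_s\equiv 0\pmod{p}$ and $s=0$ by the minimality of $r(p)$. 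Thus $r(p)\mid N$, as desired.

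The main obstacle, which I expect to be only mild, is a bookkeeping issue in the case $\left(\frac{D}{p}\right)=-1$: one must check that $U_{p+1}$ computed via the Binet formula in $\mathbb{F}_{p^2}$ really matches the mod-$p$ reduction of the integer $U_{p+1}$. This follows because $U_n$ is a symmetric polynomial in $\alpha,\beta$ and is therefore Galois-invariant, so it lies in $\mathbb{F}_p$ where both interpretations agree.
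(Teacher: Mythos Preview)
The paper does not give its own proof of this lemma; it is quoted from Lehmer without argument, so there is nothing to compare against. Your argument is the standard one and is correct for the two cases you treat.

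One small omission: the hypothesis is only $p\nmid 2PQ$, so $p\mid D$ is not excluded, and in that case $\left(\frac{D}{p}\right)=0$, meaning you must show $r(p)\mid p$. Your two-way split silently assumes $p\nmid D$. The missing case is easy---when $p\mid D$ one has $\alpha\equiv\beta\equiv P/2\pmod p$, and an induction on the recurrence gives $U_n\equiv n(P/2)^{n-1}\pmod p$, whence $U_p\equiv 0\pmod p$---but it should be mentioned. That said, every later use of this lemma in the paper is under the standing assumption $D\ne 0$ in $\Omega_p$, so the omission is harmless for the applications here.
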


There are several studies about the behavior of $U_n$ modulo $p$. In \cite{Somer}, L. Somer considered the case where $U_n$ has a maximal limited period, that is, $r(p)=p-(D/p)$. For odd prime $p$, he proved that there exists at least one Lucas sequence $U_n(P,Q)$ such that $r(p)=p-(D/p)$. If $(D/p)=1$, then there exists exactly $\phi(p-1)\cdot (p-1)/2$  Lucas sequences $U_n(P,Q)$ where $0\le P,Q \le p-1$ and $p\nmid Q$.

In \cite{Somer2} and \cite{Somer3}, Somer and K\v{r}\'{\i}\v{z}ek studied $A_w(d)$ which is the number of times that $d$ appears in a full period of $(w)$, where $w$ is a recurrence satisfying the recursion relation
\begin{equation*}
w_{n+2}=Pw_{n+1}-Qw_n.
\end{equation*}

In this paper, we consider the number $s(p)=\frac{p-(D/p)}{r(p)}$. For divisors $d$ of $p-\left(\frac{D}{p}\right)$, whether or not $d$ divides $s(p)$ is an important question. If $d=2$, we can easily determine it as follows.
\begin{lem}
\label{2 divides sp}
{\rm(\cite[p. 313]{Somer} Lemma 3)}

$s(p)$ is divisible by $2$ if and only if $\left(\frac{Q}{p}\right)=1$.
\end{lem}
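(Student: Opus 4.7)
The plan is to reduce $2\mid s(p)$ to the vanishing of a specific $U$-term modulo $p$, and then evaluate that $U$-term by explicit computation in the splitting field of \eqref{quad poly} over $\F_p$. Write $\epsilon := (D/p)$; we may assume $\epsilon \in \{\pm 1\}$, since the degenerate case $\epsilon = 0$ (i.e.\ $p \mid D$) does not arise in the setting of the lemma.

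First I would observe that $r(p)$ is, by construction, the smallest positive index at which $U_n \equiv 0 \pmod{p}$. Combined with Lemma~\ref{wp cond}, which gives $r(p) \mid p - \epsilon$, this yields
\[
2 \mid s(p) \iff r(p) \,\bigm|\, \tfrac{p-\epsilon}{2} \iff U_{(p-\epsilon)/2} \equiv 0 \pmod{p}.
\]
Since $p \nmid D = (\alpha - \beta)^2$, the quantity $\alpha - \beta$ is a unit in the relevant residue field, so by \eqref{quad def} this last congruence is equivalent to $\alpha^{(p-\epsilon)/2} \equiv \beta^{(p-\epsilon)/2}$. The lemma thus reduces to showing that this equality holds iff $(Q/p) = 1$.

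When $\epsilon = 1$, the roots lie in $\F_p^\times$, so by Fermat $\alpha^{(p-1)/2}, \beta^{(p-1)/2} \in \{\pm 1\}$, and Euler's criterion gives
\[
\alpha^{(p-1)/2}\beta^{(p-1)/2} = Q^{(p-1)/2} \equiv \left(\tfrac{Q}{p}\right) \pmod{p};
\]
the two half-powers agree iff this product is $+1$, i.e.\ iff $(Q/p) = 1$. When $\epsilon = -1$, the roots lie in $\F_{p^2}\setminus \F_p$ and are Frobenius-conjugate: $\beta = \alpha^p$, so $\alpha^{p+1} = \alpha\beta = Q$. Setting $\gamma := \alpha^{(p+1)/2}$, we have $\gamma^2 = Q$ and $\beta^{(p+1)/2} = \alpha^{p(p+1)/2} = \gamma^p$, so the required equality becomes $\gamma = \gamma^p$, i.e.\ $\gamma \in \F_p$, which is exactly the assertion that $Q$ is a square in $\F_p$.

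The only step with any subtlety is the $\epsilon = -1$ case: one must invoke Frobenius to write $\beta = \alpha^p$, and then note that $\gamma$, which always lies in $\F_{p^2}$ as a square root of $Q$, belongs to $\F_p$ precisely when $(Q/p) = 1$. Everything else is a short formal manipulation.
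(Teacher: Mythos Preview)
Your proof is correct. The paper does not give its own proof of this lemma; it is simply cited from Somer, and the paper later remarks (just after stating Theorem~\ref{main thm2}) that the case $m=2$ of its main theorem recovers the statement, since $V_2(X,1)=X^2-2$ reduces condition~(ii) there to the solvability of $X^2\equiv P^2Q^{-1}\pmod p$, i.e.\ to $(Q/p)=1$.

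Your argument is therefore genuinely different in that it is self-contained and direct: you work in the splitting field of $x^2-Px+Q$ over $\F_p$, reduce $2\mid s(p)$ to $\alpha^{(p-\epsilon)/2}=\beta^{(p-\epsilon)/2}$, and handle the split case via Euler's criterion and the inert case via the Frobenius relation $\beta=\alpha^p$. This bypasses the paper's machinery of anti-derived sequences entirely and uses only the identity $\alpha\beta=Q$. The paper's route, by contrast, derives the lemma as the smallest instance of a much broader statement, at the cost of the full apparatus of Sections~\ref{finite field}--\ref{refinement}. One minor point you leave implicit in the first displayed equivalence is that $U_n\equiv 0\pmod p$ holds exactly when $r(p)\mid n$, not merely that $r(p)$ is the least such index; this follows immediately from the restricted-period relation $U_{n+r(p)}\equiv M\,U_n$, but is worth stating.
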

However, no definitive results are known for this question.

\vspace{1cm}

$V_m(X,1)$ is a polynomial of degree $m$. For small $m$, $V_m(X,1)$ is as follows. 
\begin{equation*}
\begin{split}
&V_1(X,1)=X,\\
&V_2(X,1)=X^2-2,\\
&V_3(X,1)=X^3-3X,\\
&V_4(X,1)=X^4-4X^2+2.
\end{split}
\end{equation*}
We call these polynomials \textbf{Vieta-Lucas polynomials}.
For prime $p$ and integer $C$, we consider the question whether or not the equation $V_m(X,1)\equiv C \mod p$ has a solution. This question is not so easy for large $m$. We found the way to answer the question by using the divisibility of $s(p)$.

\begin{thm}
\label{main thm2}
Let $n$ be a non-negative integer and $p$ be a odd prime. Let $C$ be an integer such that $p\nmid C(C^2-4)$. Let $\epsilon=\left(\frac{C^2-4}{p}\right)$ and $d=\gcd(n,p-\epsilon)$.
Then, the following $(i)$ and $(ii)$ are equivalent.
\begin{enumerate}[$(i)$]
\item There exists $x\in \mathbb{Z}$ such that $V_n(x,1)\equiv C \mod p$.
\item $p$ divides $U_{(p-\epsilon)/d}(C+2,C+2)$.
\end{enumerate}

\end{thm}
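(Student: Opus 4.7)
My plan is to pass to the multiplicative group of $\mathbb{F}_{p^2}$ and interpret both sides as the solvability of an $n$-th root in a specific cyclic subgroup. Let $\gamma,\delta$ be the roots of $y^2-Cy+1\equiv 0\pmod p$, so $\gamma+\delta=C$, $\gamma\delta=1$, and their discriminant is $C^2-4$. When $\epsilon=1$ one has $\gamma\in\mathbb{F}_p^\times$, and when $\epsilon=-1$ one has $\delta=\gamma^p$ and $\gamma^{p+1}=1$. Either way $\gamma$ lies in the unique cyclic subgroup $G_\epsilon\subset\mathbb{F}_{p^2}^\times$ of order $p-\epsilon$, and the hypothesis $p\nmid C(C^2-4)$ forces $\gamma\notin\{0,\pm 1\}$. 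For any candidate $x\in\mathbb{F}_p$ with $x\ne\pm 2$, I would write the roots of $t^2-xt+1$ as $\alpha,\alpha^{-1}$ with $\alpha\in G_{\epsilon(x)}$, where $\epsilon(x)=((x^2-4)/p)$; then $V_n(x,1)=\alpha^n+\alpha^{-n}$, so $V_n(x,1)\equiv C$ is equivalent to $\alpha^n\in\{\gamma,\gamma^{-1}\}$. Since $G_{+1}\cap G_{-1}=\{\pm 1\}$ and $\gamma\ne\pm 1$, this actually forces $\alpha\in G_\epsilon$; conversely any $\alpha\in G_\epsilon$ with $\alpha^n=\gamma$ produces a valid $x=\alpha+\alpha^{-1}\in\mathbb{F}_p$. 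Hence $(i)$ is equivalent to $\gamma\in G_\epsilon^n$, which by cyclicity is $\gamma^{(p-\epsilon)/d}=1$ with $d=\gcd(n,p-\epsilon)$.

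For the second half I plan to use the change of variable $\gamma\mapsto\gamma+1$. Using $\gamma\delta=1$, one checks $(\gamma+1)+(\delta+1)=C+2$ and $(\gamma+1)(\delta+1)=C+2$, so $\gamma+1$ and $\delta+1$ are exactly the roots of the characteristic polynomial of $U_m(C+2,C+2)$ (this uses $C\ne -2$, so $\delta\ne -1$). A short calculation, again using $\delta=\gamma^{-1}$, gives $(\gamma+1)/(\delta+1)=\gamma$. Therefore
\[
U_m(C+2,C+2)=\frac{(\gamma+1)^m-(\delta+1)^m}{(\gamma+1)-(\delta+1)}\equiv 0\pmod p \quad\Longleftrightarrow\quad \gamma^m=1,
\]
and specializing $m=(p-\epsilon)/d$ identifies this with the condition obtained above, closing the equivalence $(i)\Leftrightarrow(ii)$.

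The delicate part is the first paragraph, where one must split cases according to $\epsilon(x)$ and confirm that the equation $\alpha^n=\gamma$ really pins $\alpha$ down to live in the same cyclic subgroup as $\gamma$. The reason this works cleanly is precisely the exclusion $C\ne\pm 2$, which keeps $\gamma$ out of the overlap $G_{+1}\cap G_{-1}=\{\pm 1\}$; without that, an element $\alpha$ in the "wrong" subgroup could conceivably produce $\alpha^n=\pm 1$ and muddle the equivalence. Once that case analysis is settled, the second paragraph is a pleasant algebraic observation: the affine shift $\gamma\mapsto\gamma+1$ transports divisibility of $U_m$ at parameters $(C+2,C+2)$ into the pure $n$-th-power condition $\gamma^m=1$, with the discriminants matching automatically.
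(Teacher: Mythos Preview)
Your argument is correct. The only omission is the trivial check that $x=\pm 2$ can be discarded from the outset (since $V_n(\pm 2,1)=2(\pm 1)^n\not\equiv C$ under the hypothesis $p\nmid C^2-4$); once that is noted, the case split on $\epsilon(x)$ and the intersection $G_{+1}\cap G_{-1}=\{\pm1\}$ indeed pin $\alpha$ to $G_\epsilon$, and your shift $(\gamma,\delta)\mapsto(\gamma+1,\delta+1)$ cleanly identifies $p\mid U_m(C+2,C+2)$ with $\gamma^m=1$.

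Your route is genuinely different from the paper's. The paper does not work directly in the cyclic group $G_\epsilon\subset\mathbb{F}_{p^2}^\times$. Instead it first proves an auxiliary result (Theorem~\ref{main thm}) for a general pair $(P,Q)$: that $m\mid s(p)$ is equivalent to the solvability of $V_m(X,1)\equiv P^2Q^{-1}-2$. The proof of that goes through the machinery of \emph{anti--derived sequences} in the algebraic closure $\Omega_p$ (Section~\ref{refinement}) together with a somewhat delicate induction on even $r$ in Lemma~\ref{main lem2}. Theorem~\ref{main thm2} is then obtained by specializing $(P,Q)=(C+2,C+2)$, so that $P^2/Q-2=C$ and $D=C^2-4$, and by a separate reduction from $V_n$ to $V_d$ via the periodicity Lemma~\ref{V period} and the bijectivity Lemma~\ref{V bije}. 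What the paper's approach buys is the intermediate Theorem~\ref{main thm}, a statement about $s(p)$ for arbitrary Lucas pairs, which has independent interest. What your approach buys is brevity and transparency: the whole equivalence becomes a single $n$-th--power criterion in a cyclic group, with no need for anti--derived sequences or the inductive Lemma~\ref{main lem2}, and the reduction from $n$ to $d=\gcd(n,p-\epsilon)$ is absorbed automatically into the standard fact that $\gamma\in G_\epsilon^n\iff \gamma^{(p-\epsilon)/d}=1$.
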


We can calculate $U_n \mod p$ rather quickly by using following formulas.
\begin{align}
U_n&=PV_n-2QV_{n-1}\\
V_n&=PV_{n-1}-QV_{n-2}\\
V_{2n}&=V_n^2-2Q^n\\
V_{2n+1}&=V_{n+1}V_{n}-2PQ^n
\end{align}
We can determine $(V_{2n+1},V_{2n})$ from $(V_{n+1},V_n)$, and $U_n$ from $(V_n,V_{n-1})$. Referring to the binary expansion of $n$, such a calculation would take $O(\log n)$ time.
Therefore, Theorem \ref{main thm2} is an useful way to determine whether or not the equation appearing in $(i)$ has a solution.

If $m=2$, since $V_m(X,1)=X^2-2$, the condition Theorem \ref{main thm2} $(i)$ means that there exists $X$ such that $X^2\equiv P^2\overline{Q}\mod p$. Therefore, it is equivalent to Lemma \ref{2 divides sp}.

To prove Theorem \ref{main thm2}, we consider the case $n\mid p-\epsilon$. In this case, we can prove Theorem \ref{main thm2} from the next theorem.

\begin{thm}
\label{main thm}
Let $p$ be an odd prime which does not divide $2PQD$. Let $m$ be a positive divisor of $p-(\frac{D}{p})$. Then, $(i)$ and $(ii)$ are equivalent.
\begin{enumerate}[$(i)$]
\item $m$ divides $s(p)$.
\item There exists $X$ in $\mathbb{Z}$ so that $V_m(X,1)\equiv P^2Q^*-2 \mod p$.
\end{enumerate}
\end{thm}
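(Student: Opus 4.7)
The plan is to recast both conditions in terms of a single cyclic subgroup of $\mathbb{F}_{p^2}^*$. Let $\epsilon=\left(\frac{D}{p}\right)$ and let $\alpha,\beta\in\mathbb{F}_{p^2}$ be the roots of $x^2-Px+Q$. If $\epsilon=1$ both roots lie in $\mathbb{F}_p^*$; if $\epsilon=-1$ we have $\beta=\alpha^p$ and $\alpha\notin\mathbb{F}_p$. Let $\mathcal{G}_\epsilon$ denote the unique cyclic subgroup of $\mathbb{F}_{p^2}^*$ of order $p-\epsilon$, namely $\mathbb{F}_p^*$ when $\epsilon=1$ and the norm-one subgroup $\{y:y^{p+1}=1\}$ when $\epsilon=-1$. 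A short computation (using $\gamma=\alpha^{1-p}$ in the inert case) shows that $\gamma:=\alpha\beta^{-1}$ lies in $\mathcal{G}_\epsilon$. Since $U_n\equiv0\pmod p$ is equivalent to $\alpha^n=\beta^n$, i.e.\ $\gamma^n=1$, the order of $\gamma$ in $\mathcal{G}_\epsilon$ equals the restricted period $r(p)$.

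I would next rewrite the target residue as $P^{2}Q^{-1}-2=(\alpha+\beta)^2(\alpha\beta)^{-1}-2=\gamma+\gamma^{-1}$. For the left-hand side of (ii), if $y\in\overline{\mathbb{F}_p}^{\,*}$ satisfies $y+y^{-1}=X$, then directly from definition \eqref{quad def} one has $V_m(X,1)\equiv y^m+y^{-m}\pmod p$. The requirement $X\in\mathbb{F}_p$ is equivalent to $y^p\in\{y,y^{-1}\}$, so as $X$ ranges over $\mathbb{F}_p$ the element $y$ ranges over $\mathbb{F}_p^*\cup\{y:y^{p+1}=1\}$. Condition (ii) thus becomes the existence of such a $y$ with $y^m+y^{-m}=\gamma+\gamma^{-1}$; since $y^m$ and $y^{-m}$ must be the two roots of $t^2-(\gamma+\gamma^{-1})t+1$, this is equivalent to $y^m\in\{\gamma,\gamma^{-1}\}$.

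The hypothesis $p\nmid PD$ forces $\gamma\neq\pm1$ (else $P=0$ or $D=0$ modulo $p$), so $\gamma^{\pm1}$ belongs to exactly one of the two candidate subgroups, namely $\mathcal{G}_\epsilon$. Any $y$ with $y^m=\gamma^{\pm1}$ must therefore itself lie in $\mathcal{G}_\epsilon$, and (ii) reduces to the assertion that $\gamma$ is an $m$-th power in the cyclic group $\mathcal{G}_\epsilon$ of order $p-\epsilon$. Since $m\mid p-\epsilon$ by hypothesis, this is equivalent to $\gamma^{(p-\epsilon)/m}=1$, equivalently $r(p)\mid(p-\epsilon)/m$, equivalently $m\mid(p-\epsilon)/r(p)=s(p)$, which is exactly (i).

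The main obstacle is the sub-argument that $y$ cannot contribute from the ``wrong'' subgroup, i.e.\ that $y^m\in\{\gamma,\gamma^{-1}\}$ forces $y\in\mathcal{G}_\epsilon$ rather than the other host group. This is exactly where the non-degeneracy hypothesis $p\nmid 2PQD$ must be invoked, via the elementary observation that $\mathbb{F}_p^*\cap\{y:y^{p+1}=1\}=\{\pm1\}$ combined with $\gamma\neq\pm1$; everything else is a routine translation between the arithmetic of $\alpha,\beta$ modulo $p$ and the cyclic-group structure of $\mathcal{G}_\epsilon$.
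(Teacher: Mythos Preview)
Your argument is correct and takes a genuinely different route from the paper's.  You work directly with the element $\gamma=\alpha\beta^{-1}$ inside the cyclic subgroup $\mathcal{G}_\epsilon\subset\mathbb{F}_{p^2}^*$ of order $p-\epsilon$, identify $r(p)$ with the order of $\gamma$, and translate both (i) and (ii) into the single statement ``$\gamma$ is an $m$-th power in $\mathcal{G}_\epsilon$''.  The paper instead builds the machinery of \emph{anti-derived sequences}: it constructs $P_r,Q_r\in\Omega_p$ with $V_r(P_r,Q_r)=P$, $Q_r^r=Q$, shows (via Proposition~\ref{condition U zero}) that (i) is equivalent to the existence of such a pair with $P_r^2/Q_r\in\mathbb{F}_p$, and then proves the somewhat intricate Lemma~\ref{main lem2} (using Lemma~\ref{V change} and an induction on even $r$) to pass from that condition to the solvability of $V_r(x,1)=P^2/Q-2$.

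What each buys: your approach is shorter, more conceptual, and makes the cyclic-group content of the theorem transparent; the non-degeneracy hypothesis $p\nmid PD$ enters exactly once, to force $\gamma\neq\pm1$ and hence rule out the ``wrong'' host subgroup.  The paper's approach, by contrast, develops the derived/anti-derived sequence formalism and the criterion $P^2/Q\in\mathbb{F}_p$ of Proposition~\ref{condition U zero} as reusable tools, at the cost of a longer and more computational proof of the equivalence with condition (ii).
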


Our first goal is to prove this theorem.

\section{Lucas sequences in Finite field}
\label{finite field}

	In this section, $p$ is a fixed prime $\neq 2$. We denote the finite field of order $p$ by $\F_p$ and its algebraic closure by $\Omega_p$. By abuse of notation, integer $n$ also means its congruence class $\overline{n}$. We take $P,Q$ not in integers but in $\Omega_p\setminus\{0\}$. We denote $P^2-4Q$ by $D$ and assume $D \neq 0$.
For non-negative integer $n$, we define Lucas sequence $U_n,V_n$ in $\Omega_p$ by \eqref{quad def}. This definition makes sense because $D\neq 0$ implies $\alpha \neq \beta$.
From here on, $P,Q$ will denote elements of $\Omega_p$.

By Lemma \ref{wp cond}, if $P,Q\in \F_p$ , then
\begin{equation}
\label{U zero Fp}
U_{p-1}(P,Q)U_{p+1}(P,Q)=0.
\end{equation}
However, there exists some $P,Q\in \Omega_p\setminus \F_p$ such that \eqref{U zero Fp} holds.
The next proposition shows when \eqref{U zero Fp} holds.
\begin{prop}
\label{condition U zero}
Let $P,Q\in \Omega_p\setminus\{0\}$ and $D=P^2-4Q\neq 0$.
Then, $U_{p-1}U_{p+1}=0$ if and only if $\frac{P^2}{Q}\in \F_p$.
\end{prop}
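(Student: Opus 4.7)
The plan is to reduce both sides of the claimed equivalence to a single polynomial identity in the ratio $t := \alpha/\beta$, and then observe that these two polynomial conditions coincide.

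First I would set $t = \alpha/\beta \in \Omega_p^\times$, which makes sense because $Q = \alpha\beta \neq 0$ (so both roots are nonzero) and $D \neq 0$ (so $\alpha \neq \beta$, and in particular $t \neq 1$; but we will not need $t \neq 1$ in what follows). Since
\[
U_n = \frac{\alpha^n - \beta^n}{\alpha - \beta} = \frac{\beta^{n-1}(t^n - 1)}{t - 1},
\]
the condition $U_n = 0$ is equivalent to $t^n = 1$. Hence
\[
U_{p-1}U_{p+1} = 0 \iff (t^{p-1} - 1)(t^{p+1} - 1) = 0.
\]

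Next I would translate the right-hand condition. Writing
\[
\frac{P^2}{Q} = \frac{(\alpha + \beta)^2}{\alpha\beta} = t + t^{-1} + 2,
\]
we see that $P^2/Q \in \F_p$ if and only if $t + t^{-1} \in \F_p$, which by the Frobenius characterization of $\F_p$ inside $\Omega_p$ is equivalent to $(t + t^{-1})^p = t + t^{-1}$, i.e.\ $t^p + t^{-p} = t + t^{-1}$.

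The final step is to match the two conditions. Clearing denominators in $t^p + t^{-p} = t + t^{-1}$ by multiplying through by $t^p$ gives
\[
t^{2p} + 1 = t^{p+1} + t^{p-1},
\]
which factors as
\[
(t^{p+1} - 1)(t^{p-1} - 1) = 0.
\]
This is exactly the $U_{p-1}U_{p+1} = 0$ condition from the first step, so the two are equivalent. The argument is essentially a one-line algebraic identity once the variable change $t = \alpha/\beta$ is in place; the only minor care needed is to keep track that $t$ is well-defined and nonzero, which follows from $D \neq 0$ and $Q \neq 0$. I do not anticipate a serious obstacle here.
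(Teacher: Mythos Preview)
Your argument is correct. Passing to $t=\alpha/\beta$ cleanly reduces both sides to the single identity $(t^{p+1}-1)(t^{p-1}-1)=0 \iff t^p+t^{-p}=t+t^{-1}$, and the Frobenius step $(t+t^{-1})^p=t^p+t^{-p}$ is valid in $\Omega_p$. The only cosmetic remark is that the parenthetical ``we will not need $t\neq 1$'' is slightly misleading: you do implicitly use $\alpha\neq\beta$ (equivalently $t\neq 1$) to have $U_n$ well-defined and to conclude $U_n=0\iff t^n=1$, but this is guaranteed by $D\neq 0$ anyway.

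The paper takes a different route. Instead of introducing $t$, it computes $U_p=D^{(p-1)/2}$ and $V_p=P^p$ directly from the Frobenius, then feeds these into the addition formulas of Lemma~\ref{relation U V} at the half-indices $(p\pm1)/2$. Using $U_{p\pm1}=U_{(p\pm1)/2}V_{(p\pm1)/2}$, the vanishing of $U_{p-1}U_{p+1}$ forces one of the products in \eqref{VV} or \eqref{UU} to vanish, which gives $P^{p-1}=\pm Q^{(p-1)/2}$ and hence $(P^2/Q)^{p-1}=1$. Your approach is shorter and more transparent for this isolated statement; the paper's approach has the advantage that it simultaneously establishes and exercises the addition formulas \eqref{U plus U}--\eqref{V minus V}, which are reused throughout the later sections.
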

We first prove some useful equations. 
\begin{lem}
\label{relation U V}
For any $n,m \in \mathbb{Z}$, we have
    \begin{equation}
    \label{U plus U}
        U_{n+m}+Q^mU_{n-m}=V_mU_n
    \end{equation}
    \begin{equation}
    \label{V plus V}
        V_{n+m}+Q^mV_{n-m}=V_mV_n
    \end{equation}
    \begin{equation}
    \label{U minus U}
        U_{n+m}-Q^mU_{n-m}=U_mV_n
    \end{equation}
    \begin{equation}
    \label{V minus V}
        V_{n+m}-Q^mV_{n-m}=DU_mU_n.
    \end{equation}
\end{lem}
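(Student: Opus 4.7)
The plan is to prove all four identities by direct manipulation of the Binet-type closed forms in \eqref{quad def}, exploiting the single key fact that $\alpha\beta = Q$, so that $Q^m = \alpha^m\beta^m$. First I extend $U_n$ and $V_n$ to all integers $n \in \mathbb{Z}$ via the same formulas $(\alpha^n - \beta^n)/(\alpha - \beta)$ and $\alpha^n + \beta^n$; this is legitimate because $Q \neq 0$ forces $\alpha,\beta \neq 0$, so negative powers make sense in $\Omega_p$.

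For \eqref{U plus U}, I substitute and use $Q^m = \alpha^m\beta^m$ to write
\begin{equation*}
U_{n+m}+Q^mU_{n-m} = \frac{(\alpha^{n+m}-\beta^{n+m}) + \alpha^m\beta^m(\alpha^{n-m}-\beta^{n-m})}{\alpha-\beta}.
\end{equation*}
Expanding and regrouping the numerator by the powers $\alpha^n$ and $\beta^n$ collapses it to $(\alpha^m+\beta^m)(\alpha^n-\beta^n)$, which after dividing by $\alpha-\beta$ yields $V_m U_n$.

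The other three identities follow by the same mechanism: each is a two-term $\pm$-combination that, after inserting the Binet formulas and using $Q^m = \alpha^m\beta^m$, factors as a product $(\alpha^m \pm \beta^m)(\alpha^n \pm \beta^n)$ with the appropriate sign choices. Specifically, \eqref{V plus V} produces $(\alpha^m+\beta^m)(\alpha^n+\beta^n) = V_m V_n$; \eqref{U minus U} produces $(\alpha^m-\beta^m)(\alpha^n+\beta^n)/(\alpha-\beta) = U_m V_n$; and \eqref{V minus V} produces $(\alpha^m-\beta^m)(\alpha^n-\beta^n) = (\alpha-\beta)^2 U_m U_n$, where the identification $(\alpha-\beta)^2 = P^2 - 4Q = D$ supplies exactly the factor $D$ on the right-hand side.

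There is essentially no obstacle here; each identity reduces to a two-line algebraic rearrangement, and the sole subtlety is the consistent definition of $U_n, V_n$ for negative indices, which is forced by $Q \neq 0$ and is compatible with the recursions \eqref{def U} and \eqref{def V} run in either direction. Once the Binet formulas are granted over $\mathbb{Z}$, all four identities become polynomial identities in $\alpha^{\pm 1}, \beta^{\pm 1}$ that verify by inspection.
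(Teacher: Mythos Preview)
Your proof is correct and follows essentially the same approach as the paper: direct substitution of the Binet formulas \eqref{quad def} together with $Q^m=(\alpha\beta)^m$, leading to the factorizations $(\alpha^m\pm\beta^m)(\alpha^n\pm\beta^n)$. The paper carries out only the computation for \eqref{U plus U} explicitly and states that the remaining three are analogous, exactly as you do; your additional remark about extending $U_n,V_n$ to negative indices via $Q\neq 0$ is a welcome clarification that the paper leaves implicit.
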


\begin{proof}
These equations can easily be verified by substituting \eqref{P Q def} and \eqref{quad def} for \eqref{U plus U} - \eqref{V minus V}. We prove only \eqref{U plus U}. We have
\begin{equation*}
\begin{split}
U_{n+m}+Q^mU_{n-m}&=\frac{\alpha^{n+m}-\beta^{n+m}}{\alpha-\beta}
+(\alpha\beta)^m\frac{\alpha^{n-m}-\beta^{n-m}}{\alpha-\beta}\\
&=\frac{\alpha^{n+m}-\beta^{n+m}+\alpha^n\beta^m-\alpha^m\beta^n}{\alpha-\beta}\\
&=\frac{(\alpha^m+\beta^m)(\alpha^n-\beta^n)}{\alpha-\beta}\\
&=V_mU_n.
\end{split}
\end{equation*}
Remaining equations can be proved similarly.
\end{proof}

Now we can prove Proposition \ref{condition U zero}.
\begin{proof}[Proof of Proposition \ref{condition U zero}]

From Definition \ref{quad def} with $n=p$, we have
\begin{equation}
\label{Up value}
    U_p(P,Q) = (\alpha-\beta)^{p-1}=D^{(p-1)/2},
\end{equation}
\begin{equation}
\label{Vp value}
V_p(P,Q)=(\alpha+\beta)^p = P^p
\end{equation}
since $x^p+y^p=(x+y)^p$ for any $x,y$ in $\Omega_p$ and $D = (\alpha-\beta)^2$.

Also, from Lemma \ref{relation U V} with $n=(p+1)/2$ and $m = (p-1)/2$ we have

\begin{equation}
\label{VU}
    D^{(p-1)/2}+Q^{(p-1)/2}=V_{(p-1)/2}U_{(p+1)/2},
\end{equation}
\begin{equation}
\label{VV}
    P(P^{p-1}+Q^{(p-1)/2})=V_{(p-1)/2}V_{(p+1)/2},
\end{equation}
\begin{equation}
\label{UV}
    D^{(p-1)/2}-Q^{(p-1)/2}=U_{(p-1)/2}V_{(p+1)/2},
\end{equation}
\begin{equation}
\label{UU}
    P(P^{p-1}-Q^{(p-1)/2})=DU_{(p-1)/2}U_{(p+1)/2}.
\end{equation}
Moreover, from \eqref{U plus U} with $n = m$, we have
\begin{equation}
\label{U2n}
    U_{2n}=V_nU_n
\end{equation}
for any non-negative integer $n$. In particular $U_{p\pm 1}=V_{(p\pm 1)/2}U_{(p\pm 1)/2}$ with double-sign in the same order. Therefore, if $U_{p+1}U_{p-1}=0$ then the right-hand side of either \eqref{VV} or \eqref{UU} is zero, so we have
\begin{equation}
\label{PQ condition}
P^{p-1}=\pm Q^{(p-1)/2}.
\end{equation}
Thus,  if $U_{p+1}U_{p-1}=0$ we have
\begin{equation}
\left(\frac{P^2}{Q}\right)^{p-1}=1.
\end{equation}
It is equivalent to $\frac{P^2}{Q}\in\F_p$. Conversely, if $\frac{P^2}{Q}\in\F_p$, then \eqref{PQ condition} holds. It means that the left-hand side of either \eqref{VV} or \eqref{UU} is zero, so at least one of $U_{(p\pm 1)/2}$ and $V_{(p\pm 1)/2}$ is equal to 0. Therefore, we have $U_{p+1}U_{p-1}=0$. 
\end{proof}

We will need the next lemma later.
\begin{lem}
\label{V change}
Let $y\in\Omega_p$ and $y\neq 0,4$. Then,
\begin{equation}
\label{V change eq}
\frac{V_{2r}(y,y)}{y^r}=V_r(y-2,1).
\end{equation}
\end{lem}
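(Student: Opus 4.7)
The plan is to unravel both sides in terms of the roots of the governing quadratics and recognize that the ratio $\alpha/\beta$ of the original roots supplies the roots of the second quadratic. Concretely, let $\alpha,\beta\in\Omega_p$ be the roots of $x^2-yx+y=0$, so that $\alpha+\beta=y$ and $\alpha\beta=y$. By definition \eqref{quad def},
\begin{equation*}
\frac{V_{2r}(y,y)}{y^r}=\frac{\alpha^{2r}+\beta^{2r}}{(\alpha\beta)^r}=\left(\frac{\alpha}{\beta}\right)^{\!r}+\left(\frac{\beta}{\alpha}\right)^{\!r}.
\end{equation*}
The hypothesis $y\neq 0$ guarantees $\alpha,\beta\neq 0$, so the quotients make sense.

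Next I would set $\gamma=\alpha/\beta$ and $\delta=\beta/\alpha$. A direct computation gives $\gamma\delta=1$ and
\begin{equation*}
\gamma+\delta=\frac{\alpha^2+\beta^2}{\alpha\beta}=\frac{(\alpha+\beta)^2-2\alpha\beta}{\alpha\beta}=\frac{y^2-2y}{y}=y-2.
\end{equation*}
Hence $\gamma$ and $\delta$ are precisely the roots of $x^2-(y-2)x+1=0$, and by \eqref{quad def} applied to $(P,Q)=(y-2,1)$ we conclude
\begin{equation*}
\left(\frac{\alpha}{\beta}\right)^{\!r}+\left(\frac{\beta}{\alpha}\right)^{\!r}=\gamma^r+\delta^r=V_r(y-2,1),
\end{equation*}
which is the desired identity.

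The role of the excluded values $y=0$ and $y=4$ is merely to ensure that the manipulations are legitimate: $y\neq 0$ is needed to divide by $y^r$ (and to have $\alpha,\beta\neq 0$), while $y\neq 4$ is equivalent to the discriminants $y^2-4y$ of both quadratics being nonzero, so $\alpha\neq\beta$ and $\gamma\neq\delta$. There is no real obstacle here; the only subtlety is recognizing that the correct substitution turning $(y,y)$-sequences into $(y-2,1)$-sequences is the symmetric pair $\alpha/\beta,\beta/\alpha$, after which the computation is immediate.
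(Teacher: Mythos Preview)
Your proof is correct, but it proceeds by a genuinely different route from the paper. The paper argues by induction on $r$: it checks $r=0,1$ directly and then uses the recurrence \eqref{V plus V} with $m=2$ to write $V_{2r}(y,y)=V_2(y,y)V_{2(r-1)}(y,y)-y^2V_{2(r-2)}(y,y)$, divides through by $y^r$, and matches the result to the defining recurrence \eqref{def V} for $V_r(y-2,1)$. Your argument instead exploits the closed form \eqref{quad def} directly, observing that dividing $\alpha^{2r}+\beta^{2r}$ by $(\alpha\beta)^r$ produces $(\alpha/\beta)^r+(\beta/\alpha)^r$, and that the symmetric functions of $\alpha/\beta,\beta/\alpha$ are exactly $y-2$ and $1$. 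Your route is shorter and more conceptual, since it explains \emph{why} the parameters transform as $(y,y)\mapsto(y-2,1)$; the paper's induction is more mechanical but stays entirely at the level of recurrences without invoking the roots. Both approaches use the hypotheses $y\neq 0,4$ in essentially the same way, to keep $\alpha,\beta$ nonzero and distinct (equivalently, to keep the discriminant $y^2-4y$ nonzero so that the framework of Section~\ref{finite field} applies).
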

\begin{proof}
We prove \eqref{V change eq} by induction on $r$. \eqref{V change eq} is clear for $r=0$, since $V_0(P,Q)=2$ for any $P,Q$. For $r=1$, we have
\begin{equation*}
\begin{split}
\frac{V_{2r}(y,y)}{y^r}=\frac{y^2-2y}{y}=y-2\\
V_r(y-2,1)=y-2
\end{split}
\end{equation*}
since $V_1(P,Q)=P$ and $V_2(P,Q)=P^2-2Q$.

Let $r\ge 2$ and assume that \eqref{V change eq} holds for $r-1,r-2$. By using \eqref{V plus V},
\begin{equation}
\begin{split}
\frac{V_{2r}(y,y)}{y^r}&=\frac{V_2(y,y)V_{2(r-1)}(y,y)-y^2V_{2(r-2)}(y,y)}{y^r}\\
&=\frac{(y^2-2y)V_{2(r-1)}(y,y)}{y^r}-\frac{V_{2(r-2)}(y,y)}{y^{r-2}}\\
&=\frac{(y-2)V_{2(r-1)}(y,y)}{y^{r-1}}-\frac{V_{2(r-2)}(y,y)}{y^{r-2}}\\
&=(y-2)V_{r-1}(y-2,1)-V_{r-2}(y-2,1)\\
&=V_r(y-2,1).
\end{split}
\end{equation}
We used \eqref{def V} in the last step.
Thus \eqref{V change eq} holds for any non-negative integer $r$.
\end{proof}

\section{Derived sequence and Anti-derived sequence}
\label{refinement}
In this section, we consider \textit{derived sequences} and \textit{anti-derived sequences}. For a given series $U$ if we select every $r$-th term, we obtain a new series
\begin{equation*}
0,U_r,U_{2r},U_{3r}, \cdots .
\end{equation*}
If $U_r\neq 0$, by dividing each of these by $U_r$, we obtain another series
\begin{equation*}
0,1,U_{2r}/U_r,U_{3r}/U_r,\cdots .
\end{equation*}
This series is also a Lucas sequence (see \cite[p. 437]{Lehmer} \S 4). We call it the derived sequence of order $r$ and denote it by $U^{(r)}$.

The sequence whose derived sequence of order $r$ is equal to $U$ is called the anti-derived sequence of order $r$, and is denoted by $U^{(1/r)}$. In other words, $U^{(1/r)}$ is the sequence which satisfies
\begin{equation*}
U^{(1/r)}_{nr}/U^{(1/r)}_r=U_n
\end{equation*}
for any integer $n$.
Note that the anti-derived sequence is not unique.

At first, we prove the existence of an anti-derived sequence.

\begin{lem}
Let $P,Q \in \Omega_p\setminus\{0\}$, $D=P^2-4Q\neq 0$ and $r$ a positive integer such that $p\nmid r$. Then, there exist $P_r,Q_r\in \Omega_p$ satisfy
\begin{equation}
\label{Pr Qr}
Q_r^r=Q, V_r(P_r,Q_r)=P.
\end{equation}
Moreover, $U_n(P_r,Q_r)$ is the anti-derived sequence of order $r$ for $U_n(P,Q)$.
\end{lem}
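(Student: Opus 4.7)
The plan is to work inside the algebraic closure $\Omega_p$, where $r$-th roots are freely available. Let $\alpha,\beta\in\Omega_p$ be the two roots of $x^2-Px+Q=0$; these are distinct since $D\neq 0$. My proposed construction is to pick any $\alpha_r,\beta_r\in\Omega_p$ with $\alpha_r^{\,r}=\alpha$ and $\beta_r^{\,r}=\beta$, and then set $P_r:=\alpha_r+\beta_r$ and $Q_r:=\alpha_r\beta_r$. With this choice the required identities are immediate from symmetric function considerations: $Q_r^{\,r}=(\alpha_r\beta_r)^r=\alpha\beta=Q$, and $V_r(P_r,Q_r)=\alpha_r^{\,r}+\beta_r^{\,r}=\alpha+\beta=P$.

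For the second claim I first need to know that the Lucas sequence $U_n(P_r,Q_r)$ is even well-defined, which by the convention of Section \ref{finite field} amounts to $P_r^{\,2}-4Q_r=(\alpha_r-\beta_r)^2\neq 0$. If instead $\alpha_r=\beta_r$ then raising to the $r$-th power gives $\alpha=\beta$, contradicting $D\neq 0$, so $\alpha_r,\beta_r$ are a valid pair of parameters for the Lucas construction. A direct calculation from the definition then yields
\begin{equation*}
U_{nr}(P_r,Q_r)=\frac{\alpha_r^{\,nr}-\beta_r^{\,nr}}{\alpha_r-\beta_r}=\frac{\alpha^n-\beta^n}{\alpha_r-\beta_r}
\end{equation*}
for every $n$; specializing at $n=1$ gives $U_r(P_r,Q_r)=(\alpha-\beta)/(\alpha_r-\beta_r)$, which is nonzero, so division by $U_r(P_r,Q_r)$ makes sense and the factor $\alpha_r-\beta_r$ cancels out. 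The quotient is $(\alpha^n-\beta^n)/(\alpha-\beta)=U_n(P,Q)$, which is exactly the defining property of the anti-derived sequence of order $r$.

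There is no real obstacle here: existence of the $r$-th roots is guaranteed by algebraic closure of $\Omega_p$ rather than by the hypothesis $p\nmid r$, so that hypothesis seems not to be used in this lemma and is presumably present for compatibility with later results (for instance, to avoid inseparability issues when one wants to compare the lifted parameters against elements of $\F_p$, or to have $U_r(P_r,Q_r)$ behave well modulo $p$). The only step requiring any thought is the nondegeneracy check $\alpha_r\neq\beta_r$ handled above; the rest is a direct translation of the formula $U_n=(\alpha^n-\beta^n)/(\alpha-\beta)$ through the chosen lift.
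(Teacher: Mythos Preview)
Your proof is correct and takes a genuinely different route from the paper's. The paper first picks an $r$-th root $Q_r$ of $Q$ and then solves the degree-$r$ polynomial equation $V_r(X,Q_r)=P$ for $P_r$; it then argues $U_r(P_r,Q_r)\neq 0$ indirectly via the identities $V_{2r}=V_r^2-2Q_r^r$ and $V_{2r}=DU_r^2+2Q_r^r$, and finally proves the anti-derived relation by induction using the recurrence for $U_{nr}$. You instead lift the \emph{roots} $\alpha,\beta$ to $r$-th roots $\alpha_r,\beta_r$ and read everything off from the closed form $U_n=(\alpha^n-\beta^n)/(\alpha-\beta)$: existence of $P_r,Q_r$, the nondegeneracy $\alpha_r\neq\beta_r$, the nonvanishing of $U_r(P_r,Q_r)$, and the quotient identity all become one-line computations. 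Your approach is shorter and more transparent; the paper's stays closer to the recurrence-based machinery used elsewhere. Your observation that $p\nmid r$ is never invoked is also accurate---the paper's own argument does not use it either.
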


\begin{proof}
Since $\Omega_p$ is algebraically closed, there exist $Q_r\in \Omega_p$ such that $Q_r^r=Q$. Then, the equation
\begin{equation*}
V_r(X,Q_r)=P
\end{equation*}
is a polynomial equation on $X$. Therefore, there exists $P_r\in \Omega_p$ such that $V_r(P_r,Q_r)=P$.

Next, we prove $U_r(P_r,Q_r)\neq 0$.
Suppose that $U_r(P_r,Q_r)=0$. Then, $V_{2r}(P_r,Q_r)=2Q$ from \eqref{V minus V} with $n=m=r$. But also $V_{2r}(P_r,Q_r)=P^2-2Q$ from \eqref{V plus V} with $n=m=r$, so $P^2-4Q=0$. It contradicts $D\neq 0$.

We prove that the equation
\begin{equation}
\label{anti-derived exists}
\frac{U_{nr}(P_r,Q_r)}{U_r(P_r,Q_r)}=U_n(P,Q)
\end{equation}
holds for any non-negative integer $n$. \eqref{anti-derived exists} is clear for $n=0,1$. 
By \eqref{def  U} $U_n(P,Q)$ satisfies the difference equation
\begin{equation*}
U_n(P,Q)=PU_{n-1}(P,Q)-QU_{n-2}(P,Q).
\end{equation*}
On the other hand, by \eqref{U plus U} we have
\begin{equation*}
U_{nr}(P_r,Q_r)=V_r(P_r,Q_r)U_{nr-r}(P_r,Q_r)-Q_r^rU_{nr-2r}(P_r,Q_r).
\end{equation*}
Since $V_r(P_r,Q_r)=P$ and $Q_r^r=Q$, by dividing $U_r(P_r,Q_r)$, we have
\begin{equation*}
\frac{U_{nr}(P_r,Q_r)}{U_r(P_r,Q_r)}=P\frac{U_{(n-1)r}(P_r,Q_r)}{U_r(P_r,Q_r)}
-Q\frac{U_{(n-2)r}(P_r,Q_r)}{U_r(P_r,Q_r)}.
\end{equation*}
Therefore, \eqref{anti-derived exists} holds.

Since the left side of \eqref{anti-derived exists} is equal to $U_n^{(r)}(P_r,Q_r)$, $U_n(P_r,Q_r)$ is an anti-derived sequence of order $r$. 

\end{proof}

By using Proposition \ref{condition U zero} to this anti-derived sequence, we have the next lemma.

\begin{lem}
\label{main lem1}
Let $P,Q\in \Omega_p\setminus\{0\}$ and let $D\neq 0$. Let $r$ be a positive integer such that $p\nmid r$.
Then, the following (i) and (ii) are equivalent.
\begin{enumerate}[$(i)$]
\item $U^{(1/r)}_{p-1}(P,Q)U^{(1/r)}_{p+1}(P,Q)=0$ 
\item There exists $P_r,Q_r$ in $\Omega_p$ such that $P=V_r(P_r,Q_r)$, $Q=Q_r^r$ and $\frac{P_r^2}{Q_r}\in \F_p$.
\end{enumerate}

\end{lem}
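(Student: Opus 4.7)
The plan is to reduce to Proposition \ref{condition U zero} applied to the anti-derived sequence constructed in the preceding lemma. Choose $(P_r,Q_r)\in\Omega_p^2$ with $Q_r^r=Q$ and $V_r(P_r,Q_r)=P$; the preceding lemma identifies $U_n(P_r,Q_r)$ as an anti-derived sequence $U^{(1/r)}_n(P,Q)$ (the normalizing factor $U_r(P_r,Q_r)$ is nonzero, as established there). Consequently, condition (i) asserts the existence of such a pair with $U_{p-1}(P_r,Q_r)U_{p+1}(P_r,Q_r)=0$, while condition (ii) asserts the existence of such a pair with $P_r^2/Q_r\in\F_p$. It therefore suffices, for each admissible pair $(P_r,Q_r)$, to prove the equivalence of these two vanishing conditions, which is exactly what Proposition \ref{condition U zero} yields once its hypotheses are checked.

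Before invoking Proposition \ref{condition U zero}, I would verify its hypotheses for $(P_r,Q_r)$. The condition $Q_r\neq 0$ is immediate from $Q_r^r=Q\neq 0$. For $D_r:=P_r^2-4Q_r$, I would use the standard identity $V_m(P,Q)^2-D\,U_m(P,Q)^2=4Q^m$ (which follows directly from \eqref{quad def}) applied to $(P_r,Q_r)$ with $m=r$: this yields $P^2-D_r U_r(P_r,Q_r)^2=4Q$, i.e.\ $D=D_r\,U_r(P_r,Q_r)^2$. Since $D\neq 0$, both $D_r\neq 0$ and $U_r(P_r,Q_r)\neq 0$.

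The only hypothesis of Proposition \ref{condition U zero} that can actually fail is $P_r\neq 0$, and this is the main point to handle with care. If $P_r=0$, then $P_r^2/Q_r=0\in\F_p$, so (ii) holds with this witness. Moreover, the recursion then becomes $U_{n+2}(0,Q_r)=-Q_r U_n(0,Q_r)$ with $U_0=0$, so $U_{2k}(0,Q_r)=0$ for every $k\ge 0$; since $p$ is odd, both $p-1$ and $p+1$ are even, so (i) also holds automatically. In every remaining case $P_r\neq 0$, Proposition \ref{condition U zero} delivers the equivalence pointwise in $(P_r,Q_r)$, and taking the existential disjunction over admissible pairs gives the lemma. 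The main obstacle is precisely this bookkeeping: matching the existential quantifiers in (i) and (ii) to the same witness $(P_r,Q_r)$, together with dispatching the degenerate subcase $P_r=0$.
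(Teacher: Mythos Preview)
Your argument is correct and follows exactly the route the paper indicates: the paper's entire proof is the single sentence ``By using Proposition \ref{condition U zero} to this anti-derived sequence, we have the next lemma,'' and you have fleshed this out carefully. Your added verifications---that $D_r\neq 0$ via the identity $V_r^2-D_rU_r^2=4Q_r^r$, the handling of the degenerate case $P_r=0$, and the bookkeeping that both (i) and (ii) are existential over the (non-unique) choice of anti-derived pair $(P_r,Q_r)$---fill genuine gaps the paper leaves implicit.
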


Lemma \ref{main lem1} $(i)$ is equivalent to Theorem \ref{main thm} $(i)$. That is, the next lemma holds.
\begin{lem}
Let $P,Q\in \F_p\setminus\{0\}$ and let $D\neq 0$. Let $r$ be a positive integer such that $r$ divides $p-\left(\frac{D}{p}\right)$.
Then, the following $(i)$ and $(ii)$ are equivalent.
\begin{enumerate}[$(i)$]
\item $r$ divides $s(p)$.
\item $U^{(1/r)}_{p-1}(P,Q)U^{(1/r)}_{p+1}(P,Q)=0$ 
\end{enumerate}
\end{lem}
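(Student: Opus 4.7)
The plan is to translate condition (ii) through Lemma \ref{main lem1} into a statement about $r$-th roots of $\zeta:=\alpha/\beta$ in $\Omega_p^\times$, where $\alpha,\beta$ are the roots of $x^2-Px+Q$, and then to reduce this to a solvability question in a specific cyclic subgroup of $\Omega_p^\times$ of order $p-\bigl(\tfrac{D}{p}\bigr)$. Note that $U_n=0$ iff $\zeta^n=1$, so the rank of apparition $\rho:=r(p)$ equals the order of $\zeta$ in $\Omega_p^\times$; moreover $D\neq 0$ gives $\zeta\neq 1$ and $P\neq 0$ gives $\zeta\neq -1$.

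The first step is to reformulate (ii). By Lemma \ref{main lem1}, (ii) is equivalent to the existence of $P_r,Q_r\in\Omega_p$ with $V_r(P_r,Q_r)=P$, $Q_r^r=Q$ and $P_r^2/Q_r\in\F_p$. Taking $\alpha_r,\beta_r$ to be the roots of $x^2-P_rx+Q_r$ chosen so that $\alpha_r^r=\alpha$, $\beta_r^r=\beta$, and setting $\zeta_r:=\alpha_r/\beta_r$, we get $\zeta_r^r=\zeta$, and a direct calculation gives $P_r^2/Q_r-2=\zeta_r+\zeta_r^{-1}$. So (ii) becomes: \emph{there exists an $r$-th root $\zeta_r$ of $\zeta$ in $\Omega_p^\times$ with $\zeta_r+\zeta_r^{-1}\in\F_p$.}

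The second step is to locate $\zeta$ (and any such $\zeta_r$) inside $\Omega_p^\times$. For $w\in\Omega_p^\times$, a Frobenius argument shows $w+w^{-1}\in\F_p$ iff $w^p=w$ or $w^p=w^{-1}$, iff $w\in\F_p^\times\cup M$, where $M:=\{x\in\Omega_p^\times:x^{p+1}=1\}$ is cyclic of order $p+1$. Since $\zeta+\zeta^{-1}=P^2/Q-2\in\F_p$, we have $\zeta\in\F_p^\times\cup M$. A short case analysis on $(D/p)$ (using $\beta=\alpha^p$ when $\alpha\notin\F_p$, so $\zeta=\alpha^{1-p}$ satisfies $\zeta^{p+1}=1$) places $\zeta$ in $G$, where $G=\F_p^\times$ if $(D/p)=1$ and $G=M$ if $(D/p)=-1$. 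In both cases $|G|=p-(D/p)$, and since $\zeta\neq\pm 1$ and $\F_p^\times\cap M=\{\pm 1\}$, $\zeta$ lies in exactly one of the two subgroups.

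The third step is to conclude. Any $r$-th root $\zeta_r\in\F_p^\times\cup M$ of $\zeta$ must lie in the same subgroup as $\zeta$, since each of $\F_p^\times$ and $M$ is closed under $r$-th powers and $\zeta$ belongs to only one of them. Hence (ii) is equivalent to the solvability of $x^r=\zeta$ in the cyclic group $G$ of order $p-(D/p)$; by hypothesis $r\mid|G|$, so this holds iff $\zeta^{|G|/r}=1$, iff $\rho\mid|G|/r$, iff $r\rho\mid p-(D/p)$, which is (i). The main obstacle is the second step: pinning $\zeta$ down in exactly one cyclic subgroup of $\Omega_p^\times$ of order $p-(D/p)$ requires using $P\neq 0$ and $D\neq 0$ to keep $\zeta$ off the intersection $\{\pm 1\}$ and to rule out ``cross'' $r$-th roots in the other subgroup.
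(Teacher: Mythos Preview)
Your argument is correct, but it takes a genuinely different route from the paper's.

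The paper does not pass through Lemma~\ref{main lem1} at all here. Instead it argues directly with the anti-derived sequence. For $(i)\Rightarrow(ii)$: $r\mid s(p)$ gives $U_{(p-\epsilon)/r}(P,Q)=0$, and since $U^{(1/r)}_{p-\epsilon}=U^{(1/r)}_r\cdot U_{(p-\epsilon)/r}$, one factor of the product in $(ii)$ vanishes. For $(ii)\Rightarrow(i)$: the paper shows that $U^{(1/r)}_{p+\epsilon}=0$ is impossible. Indeed, if it held then (using that zeros of a Lucas sequence are closed under multiples of the index) $U^{(1/r)}_{r(p+\epsilon)}=0$, hence $U_{p+\epsilon}(P,Q)=0$; combined with $U_{p-\epsilon}(P,Q)=0$ from Lemma~\ref{wp cond}, the recursion forces $PU_p=0$, contradicting $U_p=D^{(p-1)/2}\neq 0$. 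Thus the vanishing factor must be $U^{(1/r)}_{p-\epsilon}$, which unwinds to $(i)$.

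Your approach instead first invokes Lemma~\ref{main lem1} to turn $(ii)$ into the existence of an $r$-th root $\zeta_r$ of $\zeta=\alpha/\beta$ with $\zeta_r+\zeta_r^{-1}\in\F_p$, and then locates the problem inside the cyclic group $G\le\Omega_p^\times$ of order $p-\epsilon$ (namely $\F_p^\times$ or the norm-one subgroup $M$, according to $\epsilon$). The key observation that $\zeta\neq\pm1$ forces any admissible $\zeta_r$ to lie in the \emph{same} subgroup as $\zeta$ is exactly what makes the reduction to $r$-th power residues in a single cyclic group go through; after that, $\zeta^{|G|/r}=1\iff\rho\mid |G|/r$ is standard. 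This is more structural: it identifies $r(p)$ as the order of $\zeta$ in $G$ and explains the result as a pure $r$-th-power solvability criterion, which is conceptually illuminating and would port cleanly to variants. The paper's argument, by contrast, is shorter, needs no detour through Lemma~\ref{main lem1}, and isolates the single obstruction (the ``wrong'' factor $U^{(1/r)}_{p+\epsilon}$ can never vanish) with a two-line contradiction.
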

\begin{proof}
Suppose $(i)$. Then $U_{(p-(D/p))/r}=0$. Since $U_{(p-(D/p))/r}=U_{p-(D/p)}^{(1/r)}$, $(ii)$ holds.
Conversely, suppose $(ii)$. Then, $U_{p-(D/p)}^{(1/r)}=0$ or $U_{p+(D/p)}^{(1/r)}=0$ holds. Suppose that $U_{p+(D/p)}^{(1/r)}=0$. Then, $U_{r(p+(D/p))}^{(1/r)}=U_{p+(D/p)}=0$. On the other hands, $U_{p-(D/p)}(P,Q)=0$ holds from Lemma \ref{wp cond}.
Therefore, we have
\begin{equation*}
PU_p(P,Q)=-U_{p+1}(P,Q)+QU_{p-1}(P,Q)=0.
\end{equation*}
Since $P \neq 0$, it means $U_p(P,Q)=0$.
However, from \eqref{Up value} $U_p(P,Q)=D^{(p-1)/2}\neq 0$. It is a contradiction. Therefore, $U_{(p-(D/p))/r}=U_{p-(D/p)}^{(1/r)}=0$, so $(i)$ holds.
\end{proof}

\section{Proof of Theorem \ref{main thm}}

Now we can prove Theorem \ref{main thm}. It follows from the next lemma and Lemma \ref{main lem1}.
\begin{lem}
\label{main lem2}
Let $P,Q\in \Omega_l\setminus\{0\}$ and $D\neq 0$. Let $r$ be a positive integer co-prime to $p$.
Then, the following $(i)$ and $(ii)$ are equivalent.
\begin{enumerate}[$(i)$]
\item There exist $P_r,Q_r$ in $\Omega_p$ such that $P=V_r(P_r,Q_r)$, $Q=Q_r^r$ and $\frac{P_r^2}{Q_r}\in \F_p$.
\item There exists $x$ in $\F_p$ such that $V_r(x,1)=\frac{P^2}{Q}-2$.
\end{enumerate}
\end{lem}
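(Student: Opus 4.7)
The plan is to extract a single algebraic identity
\[
V_r(P_r,Q_r)^2/Q_r^r - 2 \;=\; V_r\!\bigl(P_r^2/Q_r - 2,\,1\bigr),
\]
after which both directions become essentially cosmetic. To prove it, let $\alpha_r,\beta_r\in\Omega_p$ be the roots of $T^2-P_r T+Q_r$ and put $w:=\alpha_r/\beta_r$. A direct computation yields $P_r^2/Q_r - 2 = w+w^{-1}$ and $V_r(P_r,Q_r)^2/Q_r^r - 2 = w^r + w^{-r}$; since $w$ and $w^{-1}$ are the roots of $T^2-(w+w^{-1})T+1$, the definition of the Vieta-Lucas polynomial gives $V_r(w+w^{-1},1)=w^r+w^{-r}$, establishing the identity. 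This is the same kind of reduction used in Lemma \ref{V change}.

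For (i) $\Rightarrow$ (ii), set $x:=P_r^2/Q_r-2\in\F_p$ and substitute $V_r(P_r,Q_r)=P$ and $Q_r^r=Q$ into the identity: its left side becomes $P^2/Q-2$ and its right side becomes $V_r(x,1)$, proving (ii).

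For (ii) $\Rightarrow$ (i), I build $P_r,Q_r$ by hand inside $\Omega_p$. Let $\alpha,\beta$ be the roots of $T^2-PT+Q$ and $z:=\alpha/\beta$, so $z+z^{-1}=P^2/Q-2$. Pick $u\in\Omega_p$ with $u+u^{-1}=x$; the hypothesis then gives $u^r+u^{-r}=V_r(x,1)=z+z^{-1}$, so $u^r\in\{z,z^{-1}\}$. Swapping $u\leftrightarrow u^{-1}$ if needed, assume $u^r=z$; choose $\alpha_r\in\Omega_p$ with $\alpha_r^r=\alpha$, set $\beta_r:=\alpha_r/u$ (so $\beta_r^r=\alpha/z=\beta$), and finally $P_r:=\alpha_r+\beta_r$, $Q_r:=\alpha_r\beta_r$. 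Then $V_r(P_r,Q_r)=\alpha+\beta=P$, $Q_r^r=\alpha\beta=Q$, and $P_r^2/Q_r-2=u+u^{-1}=x\in\F_p$, as required.

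The only bookkeeping is to check that no division is ever by zero. The hypotheses $P\neq 0$ and $D\neq 0$ force $z\neq\pm 1$, while a short case analysis of $u=\pm 1$ (i.e.\ $x=\pm 2$) shows (ii) would then imply $D=0$ or $P=0$, so $u\neq\pm 1$ as well; consequently all quotients in the construction are well-defined, and this is the only place where one has to be careful.
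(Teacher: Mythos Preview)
Your proof is correct, and it is genuinely cleaner than the paper's. Both arguments hinge on the same identity
\[
\frac{V_r(P_r,Q_r)^2}{Q_r^{\,r}}-2 \;=\; V_r\!\Bigl(\frac{P_r^2}{Q_r}-2,\,1\Bigr),
\]
but the paper derives it through the homogeneity relation $V_n(aP_r,a^2Q_r)=a^nV_n(P_r,Q_r)$ together with Lemma~\ref{V change}, whereas you obtain it in one line from the closed form $V_r(w+w^{-1},1)=w^r+w^{-r}$ with $w=\alpha_r/\beta_r$.

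The real divergence is in the direction $(ii)\Rightarrow(i)$. The paper first chooses $Q_r$ with $Q_r^r=Q$ and $P_r$ with $P_r^2/Q_r=x+2$; squaring then only yields $V_r(P_r,Q_r)=\pm P$, and resolving the sign costs a case split (odd~$r$, $r=2$, and an induction for general even~$r$). You sidestep this entirely: by picking $u$ with $u+u^{-1}=x$, arranging $u^r=\alpha/\beta$, and then choosing $\alpha_r$ with $\alpha_r^r=\alpha$ and $\beta_r:=\alpha_r/u$, you build $(P_r,Q_r)$ so that $V_r(P_r,Q_r)=\alpha_r^r+\beta_r^r=\alpha+\beta=P$ on the nose. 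The sign ambiguity never appears, and no induction is needed. Your bookkeeping paragraph correctly handles the degenerate cases $u=\pm1$ and $z=\pm1$ by tracing them back to the excluded hypotheses $P=0$ or $D=0$. The paper's approach, by contrast, buys a proof that stays inside the $(P,Q)$-parametrisation and never mentions the roots $\alpha,\beta$ explicitly; yours trades that for a shorter and more transparent argument.
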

Clearly, Lemma \ref{main lem2} $(ii)$ is equivalent to $(ii)$ of Theorem \ref{main thm} .

\begin{proof}[Proof of Lemma \ref{main lem2}]

At first, we assume ($i$).
Let $\alpha_r$ and $\beta_r$ be distinct roots of $X^2-P_rX+Q_r=0$. For any $a\neq 0$, $a\alpha_r$ and $a\beta_r$ are distinct roots of $X^2-aP_rX+a^2Q_r=0$. Therefore, we have
\begin{equation}
\label{change p q}
V_n(aP_r,a^2Q_r)=a^nV_n(P_r,Q_r)
\end{equation}
because $V_n(aP_r,a^2Q_r)=(a\alpha_r)^n+(a\beta_r)^n=a^n(\alpha_r^n+\beta_r^n)=a^nV_n(P_r,Q_r)$.
In particular, when $a=P_r/Q_r$, we have
\begin{equation}
P=V_r(P_r,Q_r)=\left(\frac{Q_r}{P_r}\right)^r V_r\left(\frac{P_r^2}{Q_r},\frac{P_r^2}{Q_r}\right).
\end{equation}

Squaring both sides,
\begin{equation}
\frac{P^2}{Q}=\frac{V_r(y,y)^2}{y^r}
\end{equation}
where $y=P_r^2/Q_r$. But from \eqref{V plus V} with $n=m=r$,
\begin{equation}
V_r(y,y)^2=V_{2r}(y,y)+2y^r.
\end{equation}
In addition, from lemma \ref{V change},
\begin{equation}
\frac{V_{2r}(y,y)}{y^r}=V_r(y-2,1).
\end{equation}
As a result, we obtain following equation
\begin{equation}
V_r(x,1)=\frac{P^2}{Q}-2
\end{equation}
where $x=y-2=P_r^2/Q_r$. So ($ii$) holds since $P_r^2/Q_r$ is in $\F_q$. 

Conversely, we assume ($ii$). 
Take $Q_r$ in $\Omega_p$ to satisfy $Q_r^r=Q$, and take $P_r$ in $\Omega_p$ to satisfy $P_r^2/Q_r-2=x$. Then,
\begin{equation*}
V_r(P_r,Q_r)^2=\left(\frac{Q_r}{P_r}\right)^{2r} V_r\left(\frac{P_r^2}{Q_r},\frac{P_r^2}{Q_r}\right)^2
=Q\left(V_r\left(\frac{P_r^2}{Q_r}-2,1\right)+2\right)=P^2.
\end{equation*}
Therefore, we have
\begin{equation}
V_r(P_r,Q_r)=\pm P
\end{equation}
If $V_r(P_r,Q_r)=P$, $(i)$ holds. Suppose that $V_r(P_r,Q_r)=-P$.
If $r$ is odd, ($i$) holds by replacing $P_r$ with $-P_r$. 
If $r=2$, $V_r(-x,1)$ is also equal to $\frac{P^2}{Q}-2$. We take $P_r'$ to satisfy $\frac{P_r'^2}{Q_r}-2=-x$. In the same way as in the proof of $(i)\Rightarrow (ii)$, we have
\begin{equation}
V_r(P_r',Q_r)=\left(\frac{Q_r}{P_r'}\right)^r V_r\left(\frac{P_r'^2}{Q_r},\frac{P_r'^2}{Q_r}\right),
\end{equation}
also. However, since $\frac{P_r'^2}{Q_r}-2=-(\frac{P_r^2}{Q_r}-2)$, $V_2(P_r',Q_r)=P_r'^2-2Q_r=-(P_r^2-2Q_r)=-V_2(P_r,Q_r)$. Thus ($i$) holds by replacing $P_r$ with $P_r'$. Note that if $r=2$, then there exist $P_r,Q_r\in \Omega_p$ such that $(i)$ holds and $\frac{P_r^2}{Q_r}-2=\pm x$.

When $r$ is even and $>2$, we proceed by induction. Let $r=2k$ and assume this lemma holds for $k$. We can verify from \eqref{V plus V} with $n=m=k$ that
\begin{equation}
V_{2k}(P,Q)=V_2(V_k(P,Q),Q^k)
\end{equation}
for any $P,Q$. In particular, we have
\begin{equation}
\frac{P^2}{Q}-2=V_k(x,1)=V_2(V_k(x,1),1).
\end{equation}
From the case $r=2$, since $V_k(x,1)\in \F_p$, there exist $P_0,Q_0$ which satisfy $P=V_2(P_0,Q_0)$, $Q=Q_0^2$, $\frac{P_0^2}{Q_0}\in \F_p$ and $V_k(x,1)=\pm (\frac{P_0^2}{Q_0}-2)$. 

In the case $V_k(x,1)=\frac{P_0^2}{Q_0}-2$, there exists $\overline{P_k},\overline{Q_k}$ which satisfy $P_0=V_k(\overline{P_k},\overline{Q_k})$, $Q_0=\overline{Q_k}^k$ and $\frac{\overline{P_k}^2}{\overline{Q_k}}\in \F_p$. \\
Since $\overline{Q_k}^{2k}= Q_0^2=Q$ and $V_{2k}(\overline{P_k},\overline{Q_k})=V_k(\overline{P_k},\overline{Q_k})^2-2\overline{Q_k}^k=P_0^2-2Q_0=P$, ($i$) holds with $P_r=\overline{P_k},Q_r=\overline{Q_k}$. 

In the case $V_k(x,1)=-(\frac{P_0^2}{Q_0}-2)$, take $D_0$ to satisfy $D_0^2=P_0^2-4Q_0$. Then $V_k(x,1) = \frac{D_0^2}{-Q_0}-2$, so there exists $\overline{P_k},\overline{Q_k}$ which satisfy $D_0=V_k(\overline{P_k},\overline{Q_k})$, $-Q_0=\overline{Q_k}^k$ and $\frac{\overline{P_k}^2}{\overline{Q_k}}\in \F_p$.\\
Since $\overline{Q_k}^{2k}=(-Q_0)^2=Q$ and $V_{2k}(\overline{P_k},\overline{Q_k})=D_0^2-(-2Q_0)=P_0^2-4Q_0=P$, ($i$) holds with these $P_r=\overline{P_k},Q_r=\overline{Q_k}$. So Lemma \ref{main lem2} proved. 
\end{proof}

\section{Proof of Theorem \ref{main thm2}}

We prove Theorem \ref{main thm2} by reducing it to Theorem \ref{main thm}.
In this section, $P,Q$ are elements of $\F_p\setminus \{0\}$. We denote $D=P^2-4Q$ and $\epsilon=(D/p)$.

\begin{lem}
\label{Vnm defo}
\begin{equation}
V_{nm}(P,Q)=V_n(V_m(P,Q),Q^m)
\end{equation}
where $n,m$ are positive integers.
\end{lem}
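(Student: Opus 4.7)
The plan is to use the explicit formula \eqref{quad def} and the fact that raising the roots of the quadratic to the $m$-th power produces a new quadratic whose coefficients are exactly $V_m(P,Q)$ and $Q^m$. This reduces the identity to a trivial substitution, with essentially no computational obstacle.

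More precisely, I would first fix $\alpha,\beta \in \Omega_p$ as the roots of $x^2 - Px + Q = 0$, so that $P = \alpha+\beta$, $Q = \alpha\beta$, and $V_k(P,Q) = \alpha^k + \beta^k$ for every non-negative integer $k$. Next I would observe that $\alpha^m$ and $\beta^m$ satisfy
\begin{equation*}
(\alpha^m) + (\beta^m) = V_m(P,Q), \qquad (\alpha^m)(\beta^m) = (\alpha\beta)^m = Q^m,
\end{equation*}
so $\alpha^m$ and $\beta^m$ are precisely the roots of $x^2 - V_m(P,Q)\, x + Q^m = 0$.

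Applying the definition \eqref{quad def} a second time, now with parameters $(V_m(P,Q), Q^m)$ and index $n$, yields
\begin{equation*}
V_n\bigl(V_m(P,Q), Q^m\bigr) = (\alpha^m)^n + (\beta^m)^n = \alpha^{nm} + \beta^{nm} = V_{nm}(P,Q),
\end{equation*}
which is the claimed identity. The only subtlety worth a remark is the degenerate case $\alpha = \beta$ (i.e.\ $D = 0$), but here $V_k$ is still defined by the recurrence \eqref{def V}, and the identity can alternatively be checked by induction on $n$ using \eqref{def V} together with $Q^m$ as the new ``$Q$''; since the ambient hypothesis in this section is $D \neq 0$, the root-based computation above suffices.
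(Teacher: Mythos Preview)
Your proof is correct and takes a genuinely different route from the paper's. The paper argues by induction on $n$: it checks $n=0,1$ directly and then uses the identity \eqref{V plus V} (with $n$ replaced by $mn$ and with shift $m$) to see that $V_{(n+1)m}=V_m\,V_{nm}-Q^m V_{(n-1)m}$, which is exactly the recurrence \eqref{def V} for $V_n(V_m,Q^m)$; since both sides obey the same second-order recurrence with the same initial data, they agree. Your argument instead goes through the closed form \eqref{quad def}, observing that $\alpha^m,\beta^m$ are the roots of $x^2-V_m(P,Q)x+Q^m$ and reading off the identity in one line. Your approach is shorter and more transparent conceptually; the paper's recurrence argument has the minor advantage that it never needs to worry about whether the \emph{new} pair $(V_m,Q^m)$ has distinct roots (i.e.\ whether $\alpha^m=\beta^m$), though as you note this is harmless for $V_n$ since $V_n=\alpha^n+\beta^n$ makes sense regardless.
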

\begin{proof}
The cases $n=0$ and $n=1$ are trivial, so it is sufficient to verify that $V_{nm}(P,Q)$ and $V_n(V_m(P,Q),Q^m)$ satisfy the same difference equation.
From \eqref{V plus V} with replacing $n$ by $mn$, $V_{nm}(P,Q)$ satisfy the following difference equation.
\begin{equation}
V_{(n+1)m}=V_mV_{nm}-Q^mV_{(n-1)m}.
\end{equation}
It is compatible with the difference equation of $V_n(V_m(P,Q),Q^m)$.
\end{proof}

\begin{lem}
\label{V period}
The period of $V_n(P,1)$ modulo $p$ divides $p-\epsilon$. That is, 
\begin{equation*}
V_{n+(p-\epsilon)}(P,1)\equiv V_n(P,1) \mod p
\end{equation*}
for any index $n$.
\end{lem}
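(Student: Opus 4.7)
The plan is to exploit the fact that $V_n(P,1)$ satisfies the second-order linear recurrence $V_{n+2}=PV_{n+1}-V_n$ (this is \eqref{def V} with $Q=1$). Since the shifted sequence $n \mapsto V_{n+(p-\epsilon)}(P,1)$ satisfies the same recurrence, to prove the congruence for all $n$ it is enough to check the two base cases
\begin{equation*}
V_{p-\epsilon}(P,1)\equiv V_0=2,\qquad V_{p-\epsilon+1}(P,1)\equiv V_1=P \pmod p.
\end{equation*}

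To obtain these two values, I would first record what \eqref{Up value} and \eqref{Vp value} give for $Q=1$: namely $V_p(P,1)\equiv P^p\equiv P\pmod p$ and $U_p(P,1)=D^{(p-1)/2}\equiv\epsilon\pmod p$ by Euler's criterion. Then apply \eqref{V plus V} and \eqref{V minus V} with $m=1$, $n=p$; using $Q=1$ these collapse to
\begin{equation*}
V_{p+1}+V_{p-1}\equiv PV_p\equiv P^2,\qquad V_{p+1}-V_{p-1}\equiv DU_1U_p\equiv D\epsilon \pmod p,
\end{equation*}
so that $V_{p-1}\equiv(P^2-D\epsilon)/2$ and $V_{p+1}\equiv(P^2+D\epsilon)/2$ modulo $p$.

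Now I would split into cases on $\epsilon$. If $\epsilon=1$, then $V_{p-\epsilon}=V_{p-1}\equiv(P^2-D)/2=4/2=2$, and $V_{p-\epsilon+1}=V_p\equiv P$, as required. If $\epsilon=-1$, then $V_{p-\epsilon}=V_{p+1}\equiv(P^2-D)/2=2$; and for $V_{p+2}$ I would use one more step of the recurrence, $V_{p+2}=PV_{p+1}-V_p\equiv P\cdot 2-P=P\pmod p$. In both cases the two base identities hold, and the induction on $n$ via the recurrence $V_{n+2}=PV_{n+1}-V_n$ finishes the proof.

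There isn't really a hard step here; the only subtlety is noticing that one must treat $\epsilon=\pm 1$ separately, because in the two cases the identity $V_{p-\epsilon}\equiv 2$ comes from reading off opposite signs in the formula $V_{p\pm 1}\equiv(P^2\pm D\epsilon)/2$. Everything else is a bookkeeping exercise with the Lucas identities from Lemma~\ref{relation U V}.
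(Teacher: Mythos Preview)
Your proof is correct and follows essentially the same route as the paper: reduce to the two base cases $V_{p-\epsilon}\equiv 2$ and $V_{p-\epsilon+1}\equiv P$, then compute these using $V_p\equiv P$, $U_p\equiv\epsilon$, and identities from Lemma~\ref{relation U V}. The only cosmetic difference is that the paper expresses $V_n$ in terms of $U_n$ via \eqref{U minus U} (writing $V_n=2U_{n+1}-PU_n=PU_n-2QU_{n-1}$ and invoking $U_{p-\epsilon}=0$), whereas you combine \eqref{V plus V} and \eqref{V minus V} at $n=p$, $m=1$ into a $2\times 2$ linear system for $V_{p\pm1}$; both computations are equally short.
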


\begin{proof}
Since $V_0=2$ and $V_1=P$, it is sufficient to prove $V_{p-\epsilon}=2$ and $V_{p-\epsilon+1}=P$.
From \eqref{Vp value}, $V_p=P$ since $P \in \F_p$.
From \eqref{U minus U} with $m=1$, we have
\begin{equation}
V_n=U_{n+1}-QU_{n-1}=PU_n-2QU_{n-1}=2U_{n+1}-PU_n.
\end{equation}
If $\epsilon=1$, then $V_{p-1}=2U_p-PU_{p-1}=2$, since $U_p=\epsilon$ and $U_{p-\epsilon}=0$. Since $V_{p-\epsilon+1}=V_p=P$, this lemma holds for $\epsilon=1$.

Suppose that $\epsilon=-1$. Then $V_{p+1}=PU_{p+1}-2QU_p=2$ since $U_{p+1}=0, Q=1$ and $U_p=-1$. Also, 
\begin{equation*}
V_{p+2}=PV_{p+1}-QV_p=2P-P=P. 
\end{equation*}
Therefore, this lemma also holds for the case $\epsilon=-1$.
\end{proof}

\begin{lem}
\label{V bije}
Let $h$ be an integer co-prime to $p-\epsilon$. Then, the map $\F_p \to \F_p$ $(x \mapsto V_h(x,1))$ is bijective.
\end{lem}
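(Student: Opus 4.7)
The plan is to use the finiteness of $\F_p$ to reduce bijectivity to the existence of a left inverse, and to construct that left inverse as another Vieta-Lucas polynomial $V_{h'}(\cdot, 1)$. The tools are Lemma \ref{Vnm defo} (composition of Vieta-Lucas polynomials) and Lemma \ref{V period} (the period of $V_n(\cdot, 1) \bmod p$ divides $p - \epsilon$).

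Since $\gcd(h, p - \epsilon) = 1$, I pick a positive integer $h'$ with $hh' \equiv 1 \pmod{p - \epsilon}$. By Lemma \ref{V period}, for every $x \in \F_p$ we then have
\begin{equation*}
V_{hh'}(x, 1) \equiv V_1(x, 1) = x \pmod p.
\end{equation*}
Lemma \ref{Vnm defo}, specialized to $Q = 1$ (so that $Q^h = 1$), rewrites the left-hand side as $V_{h'}\bigl(V_h(x, 1), 1\bigr)$. Combining, $V_{h'}(\cdot, 1) \circ V_h(\cdot, 1) = \mathrm{id}_{\F_p}$, so the map $x \mapsto V_h(x, 1)$ is injective, and hence bijective on the finite set $\F_p$.

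The one step I expect to require some care is the uniform application of Lemma \ref{V period}: that lemma attaches $\epsilon$ to a particular base value, whereas here $x$ ranges over $\F_p$, so I need to check that the single modulus $p - \epsilon$ from the hypothesis really does serve as a common period for $V_n(x, 1)$ across all $x$ under consideration (possibly after restricting the domain to the subset of $\F_p$ with the prescribed Legendre symbol $\bigl(\tfrac{x^2-4}{p}\bigr) = \epsilon$). Once that uniformity is secured, the composition argument above produces the Vieta-Lucas left inverse and closes the proof immediately.
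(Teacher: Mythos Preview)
Your approach is exactly the paper's: take $k$ with $kh\equiv 1\pmod{p-\epsilon}$ and combine Lemma~\ref{Vnm defo} with Lemma~\ref{V period} to write $V_k(V_h(x,1),1)=V_{kh}(x,1)=V_1(x,1)=x$. The paper stops there; you, correctly, do not.

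Your caveat is not a matter of care but a genuine obstruction: the lemma is false as written, and the paper's own proof shares the gap. Lemma~\ref{V period} gives period dividing $p-\epsilon_x$ with $\epsilon_x=\bigl(\tfrac{x^2-4}{p}\bigr)$, and that sign varies with $x$, so one fixed $\epsilon$ cannot serve all of $\F_p$. For instance, with $p=7$, $\epsilon=-1$, $h=3$ one has $\gcd(h,p-\epsilon)=\gcd(3,8)=1$, yet $V_3(1,1)\equiv V_3(5,1)\equiv 5\pmod 7$, so $x\mapsto V_3(x,1)$ is not injective on $\F_7$. Your parenthetical fix is the right repair: restrict to $T_\epsilon=\{x\in\F_p:\bigl(\tfrac{x^2-4}{p}\bigr)=\epsilon\}\cup\{\pm 2\}$. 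Writing $x=\alpha+\alpha^{-1}$ with $\alpha$ in the cyclic group of order $p-\epsilon$ (either $\F_p^*$ or the norm-one subgroup of $\F_{p^2}^*$) turns $V_h$ into $\alpha\mapsto\alpha^h$, a bijection on that group exactly when $\gcd(h,p-\epsilon)=1$; in the example above $V_3$ is indeed a bijection of $T_{-1}=\{0,2,3,4,5\}$. This restricted statement is all the subsequent Proposition needs, since any $x$ solving $V_m(x,1)\equiv C$ with $\bigl(\tfrac{C^2-4}{p}\bigr)=\epsilon\ne 0$ is automatically forced into $T_\epsilon$.
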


\begin{proof}
Since $h$ is co-prime to $p-\epsilon$, there exists $k\in \mathbb{Z}$ such that $kh\equiv 1 \mod p-\epsilon$. From Lemmas \ref{Vnm defo}, \ref{V period}, we have
\begin{equation}
V_k(V_h(x,1),1)=V_{kh}(x,1)=V_1(x,1)=x.
\end{equation}
Therefore, $y\mapsto V_k(y,1)$ is inverse map of $x\mapsto V_h(x,1)$. Thus, the map $x\mapsto V_h(x,1)$ is bijective.
\end{proof}

Theorem \ref{main thm2} follows from Theorem \ref{main thm} and the next proposition.

\begin{prop}
Let $d=\gcd(n,p-\epsilon)$.
Then, the following $(i)$ and $(ii)$ are equivalent.
\begin{enumerate}[$(i)$]
\item There exists $x\in \mathbb{Z}$ such that $V_n(x,1)\equiv C \mod p$.
\item There exists $x\in \mathbb{Z}$ such that $V_d(x,1)\equiv C \mod p$.
\end{enumerate}

\end{prop}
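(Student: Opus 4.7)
The plan is to establish both implications via the composition identity $V_{nm}(P,1)=V_n(V_m(P,1),1)$ of Lemma \ref{Vnm defo} (taking $Q=1$), together with the period bound in Lemma \ref{V period} and a B\'ezout relation extracted from $d=\gcd(n,p-\epsilon)$.

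The direction $(i)\Rightarrow(ii)$ is immediate: writing $n=dm$, Lemma \ref{Vnm defo} yields $V_n(x,1)=V_d(V_m(x,1),1)$, so if $V_n(x_0,1)\equiv C$ then $y_0:=V_m(x_0,1)$ satisfies $V_d(y_0,1)\equiv C$. For $(ii)\Rightarrow(i)$, given $y\in\mathbb{Z}$ with $V_d(y,1)\equiv C\pmod p$, I pick via B\'ezout a positive integer $a$ with $an\equiv d\pmod{p-\epsilon}$; then Lemma \ref{V period} gives $V_{an}(y,1)\equiv V_d(y,1)\equiv C\pmod p$, and re-applying Lemma \ref{Vnm defo} in the form $V_{an}(y,1)=V_n(V_a(y,1),1)$ lets me set $x:=V_a(y,1)$ and conclude $V_n(x,1)\equiv C\pmod p$.

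The main obstacle is that Lemma \ref{V period} only guarantees the period of $V_\cdot(y,1)$ divides $p-\epsilon_y$, where $\epsilon_y=\left(\tfrac{y^2-4}{p}\right)$, whereas the B\'ezout relation lives modulo $p-\epsilon$ with $\epsilon=\left(\tfrac{C^2-4}{p}\right)$; to legitimately invoke the period I must first show $\epsilon_y=\epsilon$. I plan to bridge this via the classical identity $V_k^2-D\,U_k^2=4Q^k$ (immediate from the closed forms in \eqref{quad def}) applied to $(P,Q)=(y,1)$ at $k=d$, which yields $C^2-4\equiv(y^2-4)\,U_d(y,1)^2\pmod p$. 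The standing hypothesis $p\nmid C(C^2-4)$ forces the left side to be nonzero modulo $p$, hence both $y^2-4$ and $U_d(y,1)$ are nonzero modulo $p$; since $U_d(y,1)^2$ is a nonzero square the Legendre symbols of $C^2-4$ and $y^2-4$ coincide, yielding $\epsilon_y=\epsilon$ as required.
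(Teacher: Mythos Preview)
Your argument is correct and rests on the same two ingredients the paper uses---the composition identity of Lemma~\ref{Vnm defo} and the period bound of Lemma~\ref{V period}---but it is organised differently. The paper handles both implications in a single stroke: it picks $h$ coprime to $p-\epsilon$ with $n\equiv dh\pmod{p-\epsilon}$, writes $V_n(x,1)=V_{dh}(x,1)=V_d(V_h(x,1),1)$, and then invokes the bijectivity of $x\mapsto V_h(x,1)$ (Lemma~\ref{V bije}) to conclude that the images of $V_n(\cdot,1)$ and $V_d(\cdot,1)$ coincide. You instead treat the two implications separately, getting $(i)\Rightarrow(ii)$ for free from $n=dm$ and handling $(ii)\Rightarrow(i)$ with a B\'ezout coefficient $a$ satisfying $an\equiv d\pmod{p-\epsilon}$; this bypasses Lemma~\ref{V bije} entirely. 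Your route also makes explicit a point the paper leaves tacit: to apply Lemma~\ref{V period} with modulus $p-\epsilon$ one must know that $\left(\tfrac{y^2-4}{p}\right)=\epsilon$, and your use of the identity $V_d^2-4=(y^2-4)U_d^2$ together with $p\nmid C^2-4$ supplies exactly this. So your proof is slightly longer but more self-contained and more scrupulous about the period hypothesis.
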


\begin{proof}
Take $h\in \mathbb{Z}$ to satisfy $n\equiv dh \mod p-\epsilon$ and $\gcd(h,p-\epsilon)=1$. Then, $V_n(x,1)=V_{dh}(x,1)=V_d(V_h(x,1),1)$. Therefore, this proposition follows from Lemma \ref{V bije}.
\end{proof}

\bibliographystyle{amsplain}
\bibliography{Bib}

\end{document}